\newtheorem{theorem}{Theorem}[section]
\newtheorem{corollary}[theorem]{Corollary}
\theoremstyle{definition}
\theoremstyle{remark}
\numberwithin{equation}{section}
\begin{document}

% \title[short text for running head]{full title}
\title{Recurrence formula for some higher order evolution equations}

%    Only \author and \address are required; other information is
%    optional.  Remove any unused author tags.

%    author one information
% \author[short version for running head]{name for top of paper}
\author{Yoritaka Iwata}
\address{Osaka University of Economics and Law, Gakuonji, Yao, Osaka 581-0853, Japan}
\curraddr{}
\email{iwata\_phys@08.alumni.u-tokyo.ac.jp}
\thanks{}

%    author two information
%\author{}
%\address{}
%\curraddr{}
%\email{}
%\thanks{}

%    \subjclass is required.
\subjclass[2020]{58D07, 47B93, 35Q51}

\date{}

\dedicatory{}

%    Abstract is required.
\begin{abstract}
Riccati's differential equation is formulated as abstract equation in finite or infinite dimensional Banach spaces.
Since the Riccati's differential equation with the Cole-Hopf transform shows a relation between the first order evolution equations and the second order evolution equations, its generalization suggests the existence of recurrence formula leading to a sequence of differential equations with different order.
%%%
In conclusion, by means of the logarithmic representation of operators, a transform between the first order evolution equations and the higher order evolution equation is presented.
Several classes of evolution equations with different orders are given, and some of them are shown as examples.
\end{abstract}

\maketitle

%    Text of article.

%    Bibliographies can be prepared with BibTeX using amsplain,
%    amsalpha, or (for "historical" overviews) natbib style.

\section{Introduction}
%%%%%%  
Let $u(t) \in C^0([0,T]; X)$ satisfy a first order evolution equation $\partial_t u(t) = A_1(t) u(t)$ in a Banach space $X$.
In this situation, the associated evolution operator $U(t,s)$ satisfying
\[
u(t) = U(t,s) u_s, \quad   u(s) = u_s \in X
\]
is assumed to exist as a two-parameter $C^0$-semigroup.
Let $k$ be a positive integer. 
For a given solution $u(t)$ in a fixed interval $t \in [0, T]$, much attention is paid to find high order evolution equations
\begin{equation} \label{kthmastereq}
\partial_t^k u(t) = A_k(t) u(t) 
~ \Leftrightarrow ~ 
\partial_t^k U(t,s) u_s = A_k(t) U(t,s) u_s  \qquad
\end{equation}
which is satisfied by exactly the same $u(t) = U(t,s) u_s$ of the first order evolution equation, and therefore by the evolution operator $U(t,s)$.
According to the preceding results \cite{17iwata-1,17iwata-3,20iwata-3},  a set of operators $\{ A_k(t) \}_{0 \le t \le T}$ is represented by the logarithm of operators (for historical milestones of logarithm of operators under the sectorial  assumption, see \cite{94boyadzhiev,03hasse,06hasse,69nollau,00okazawa-1,00okazawa-2}).
On the other hand, abstract representations of Cole-Hopf transform \cite{51cole,50hopf} and the Miura transform \cite{68miura} have been obtained by the logarithmic representation of operators  \cite{19iwata-2,20iwata-1}, and they are to be unified in one recurrence formula.
In the recurrence formula, the Cole-Hopf transform corresponds to the first order relation, and the Miura transform to the second order relation.

In this paper, by generalizing the logarithmic representation  (for its physical applications, see \cite{19iwata-1,18iwata-2, 22iwata-1}) providing nonlinear transforms, a relation between the solutions of first order evolution equations and second order evolution equations is generalized to a general $k$-th order relation.
It is equivalent to profile unknown $k$-th order operator $A_k(t)$ from the first order operator $A_1(t)$.
That is to say the presented recurrence formula brings about a kind of unification inside unbounded operator theory.

\section{Mathematical settings}
Let $X$ be a Banach space and $B(X)$ be a set of bounded operators on $X$.
The norms of both $X$ and $B(X)$ are denoted by $\| \cdot \|$ if there is no ambiguity.
Let $t$ and $s$ be real numbers included in a finite interval $[0, T]$, and $U(t,s)$ be the evolution operator in $X$. 
The two parameter semigroup $U(t,s)$, which is continuous with respect to both parameters $t$ and $s$, is assumed to be a bounded operator on $X$.
That is, the boundedness condition 
\begin{equation} \label{bound} \begin{array}{ll}
\| U(t,s) \| \le M e^{\omega (t-s)}
\end{array} \end{equation}
is assumed.
Following the standard theory of abstract evolution equations~\cite{61kato,70kato,73kato,60tanabe,61tanabe,79tanabe}, the semigroup property:
\[ \begin{array}{ll}
 U(t,r) U(r,s) = U (t,s)  
\end{array} \]
is assumed to be satisfied for arbitrary $s \le r \le t$ included in a finite interval $[0, T]$.
Let the evolution operator $U(t,s)$ be generated by $A_1(t)$.
Then, for certain functions $u(t) \in C^0([0,T]; X)$,
\begin{equation} \label{1stmastereq}
\partial_t u(t) = A_1(t) u(t)
\end{equation}
is satisfied in $X$.
That is, the operator $A_1(t)$ is an infinitesimal generator of $C^0$-semigroup $U(t,s)$.

\section{Back ground and basic concepts}
\subsection{Logarithmic representation of operators}
According to the preceding work \cite{17iwata-1,17iwata-3,20iwata-3} dealing with the logarithmic representation of operators, the evolution operator $U(t,s)$ is assumed to be generated by $A_1(t)$.
Let $\psi$ satisfy Eq.~\eqref{1stmastereq}.
The solution $\psi$ is generally represented by $\psi(t) = U(t,s) u_s$ for a certain $u_s \in X$, so that the operator equality is obtained by identifying $\psi(t)$ with $U(t,s)$, that is, identifying a function with an operator.
For instance, it is practical to imagine that $\psi(t)$ is represented by
\begin{equation}  \label{expop}
\psi(t)  =  \exp \left( \int_s^t A_1(\tau) d \tau  \right).
\end{equation}
Here the integral representation  \eqref{expop} is valid at least if $A_1(t)$ is $t$-independent, whose validity in $t$-independent cases is shown in Appendix I.
Let $\kappa$ be a certain complex number.
Using the Riesz-Dunford integral \cite{43dunford}, the infinitesimal generator $A_1 (t)$ of the first order evolution equation is written by
\begin{equation} \label{mastart}
A_1(t) =  \psi^{-1}   \partial_t  \psi
=   (I + \kappa U(s,t))
\partial_t  {\rm Log} ( U(t,s) + \kappa I)
\end{equation}
under the commutation, where ${\rm Log}$ means the principal branch of logarithm, and $U(t,s)$ is temporarily assumed to be a group (i.e., existence of $U(s,t) = U(t,s)^{-1}$ is temporarily assumed to be valid for any $0 \le s \le t \le T$), and not only a semigroup. 
The validity of \eqref{mastart} is confirmed formally by 
\[ \begin{array}{ll}
(I + \kappa U(s, t) ) \partial_t {\rm Log}(U(t, s) +  \kappa I)   \vspace{1.5mm}\\
= U(s, t)(U(t, s) + \kappa I)\partial_t U(t, s)(U(t, s) + \kappa I)^{-1}  \vspace{1.5mm}\\
= U(s, t) \partial_t U(t, s)   \vspace{1.5mm}\\
 = U(s, t)A_1(t)U(t, s) = A_1(t)
\end{array} \]
under the commutation assumption.
This relation is associated with the abstract form of the Cole-Hopf transform \cite{19iwata-2}.
Indeed the correspondence between $\partial_t  \log \psi$ and $A_1(t)$ can be understood by $U(s, t) \partial_t U(t, s)   = A_1(t)$ shown above.
Indeed,
\[
 \psi^{-1}   \partial_t  \psi   =   \partial_t  \log \psi 
 \quad  \Rightarrow \quad  
  \partial_t  \psi   =  ( \partial_t  \log \psi) \psi 
\]
is valid under the commutation assumption.

By introducing alternative infinitesimal generator $a(t,s)$ \cite{17iwata-3} satisfying
\[
e^{a(t,s)} := U(t,s)+ \kappa I, 
\]
a generalized version of the logarithmic representation
\begin{equation}  \label{altrep}
 A_1(t) =  \psi^{-1}   \partial_t  \psi
=   ( I   - \kappa   e^{-a(t,s)}  )^{-1}   \partial_t  a(t,s)
\end{equation}
is obtained, where $U(t,s)$ is assumed to be only a semigroup. 
The right hand side of Eq.~(\ref{altrep})  is actually a generalization of \eqref{mastart}; indeed, by only assuming $U(t,s)$ as a semigroup defined on $X$,  $e^{-a(t,s)}$ is always well defined by a convergent power series, and there is no need to have a temporal assumption for the existence of $U(s,t)= U(t,s)^{-1}$. 
It is remarkable for $e^{a(t,s)}$ being generated by $a(t,s)$ that $e^{-a(t,s)} = e^{a(s,t)}$ is not necessarily satisfied~\cite{17iwata-3}.
The validity of Eq.~\eqref{altrep} is briefly seen in the following.
Using the generalized representation, 
\[ \begin{array}{ll}
e^{a(t,s)} = U(t, s) + \kappa I \quad \Rightarrow \quad a(t, s) = {\rm Log}(U(t, s) + \kappa I) \vspace{1.5mm} \\
\qquad  \Rightarrow \quad \partial_t a(t, s) = \left[ \partial_t U(t, s) \right]  (U(t, s) + \kappa I)^{-1} 
= A_1(t)U(t, s)(U(t, s) + \kappa I)^{-1}
\end{array} \]
is obtained under the commutation assumption between $A_1(t)$ and $U(t,s)$.
It leads to
\[ \begin{array}{ll}
(I - \kappa e^{-a(t,s)})^{-1} \partial_t a(t,s)  =
(I - \kappa e^{-a(t,s)})^{-1} A_1(t)U(t, s)(U(t, s) + \kappa I)^{-1},
\end{array} \]
and therefore
\[ \begin{array}{ll}
A_1 (t) =   (U(t,s) + \kappa I )  U(t,s) ^{-1}  A_1(t)U(t, s)(U(t, s) + \kappa I)^{-1}   \vspace{1.5mm} \\
\quad \Leftrightarrow \quad
A_1 (t) = (I - \kappa e^{-a(t,s)})^{-1} A_1(t) (I - \kappa  e^{-a(t,s)} )
\end{array} \]
is valid under the commutation assumption.
It simply shows the consistency of representations using the alternative infinitesimal generator $a(t,s)$.
In the following, the logarithmic representation \eqref{altrep} is definitely used, and the original representation \eqref{mastart} appears if it is necessary. 
It is notable for bounded/unbounded operators holding the above representation that, based on ordinary and generalized logarithmic representations, the algebraic property of set of infinitesimal generator is known \cite{18iwata,20iwata-2}.

\subsection{Miura transform}
The Miura transform, which holds the same form as the Riccati's differential equation, is represented by
\[
u = \partial_x v + v^2,
\]
where $u$ is a solution of the modified Korteweg-de Vries equation (mKdV equation), and $v$ is a solution of the Korteweg-de Vries equation (KdV equation).
For the Riccati's differential equation, $u$ is a function standing for an inhomogeneous term, and $v$ is unknown function.
It provides a representation of the infinitesimal generator of second order differential equation.
Indeed, if the Miura transform is combined with the Cole-Hopf transform $v = \psi^{-1} \partial_x \psi$, it is written by
\begin{equation} \label{basic} \begin{array}{ll}
u  = \partial_x ( \psi^{-1} \partial_x \psi ) + ( \psi^{-1} \partial_x \psi)^2  \vspace{2.5mm} \\
= - \psi^{-2} (\partial_x \psi)^2+  \psi^{-1}  \partial_x ( \partial_x \psi ) +  \psi^{-2} (\partial_x \psi)^2  \vspace{2.5mm} \\
= \psi^{-1} \partial_x^2 \psi,
\end{array} \end{equation}
where the commutation between $\psi$,  $\partial_x \psi$ and $\partial_x^2 \psi$ is assumed.
This issue should be carefully treated in operator situation; i.e., in the standard theory of abstract evolution equations of hyperbolic type \cite{70kato,73kato}, $x$-dependent infinitesimal generators (corresponding to $\partial_x \psi$ or $\partial_x^2 \psi$ respectively) do  not generally commute with the  evolution operator  (corresponding to $\psi$ or $\partial_x \psi$ respectively), although such commutations are always true in $x$-independent infinitesimal generators.
For sufficiently smooth $\psi$ in the $x$-direction, one of the implication here is that the function $\psi$ satisfies both the second order equation
\[ \begin{array}{ll}
u  = \psi^{-1} \partial_x^2 \psi   
\quad \Leftrightarrow \quad
 \partial_x^2 \psi =  u  \psi   
\end{array} \]
and the first order equation
\[ \begin{array}{ll}
v = \psi^{-1} \partial_x \psi
\quad \Leftrightarrow \quad
 \partial_x \psi =  v  \psi   ,
\end{array} \]
at the same time under the commutation assumption.
The combined use of Miura transform and Cole-Hopf transform is called the combined Miura transform in \cite{20iwata-1} (for the combined use in the inverse scattering theory, e.g., see \cite{81ablowitz}).
Provided the solvable first order autonomous differential equation (i.e., the Cole-Hopf transform) with its solution $\psi$, the Miura transform shows a way to find the second order autonomous differential equation to be satisfied by exactly the same $\psi$. 

The combined Miura transform $ \partial_x^2 \psi = u \psi$ can be generalized as the second order abstract equation $ \partial_t^2 \psi = A_2(t) \psi $ in finite or infinite dimensional Banach spaces by taking $u$ as a closed operator $A_2(t):D(A_2) \to X$ in a Banach space $X$, where the index $2$ denotes the order of differential equations, and the notation of variable is chosen as $t$.
The solution $\psi$ is generally represented by $\psi(t) = U(t,s) u_s$ for a certain $u_s \in X$, so that the operator version of the combined Miura transform is formally obtained by assuming $\psi(t) = U(t,s)$.
\begin{equation} \label{intermed} \begin{array}{ll} 
A_1(t) = \psi^{-1}  \partial_t \psi  \vspace{1.5mm} \\
 \qquad  = \partial_t \log \psi    = \partial_t \log U(t,s)  \vspace{1.5mm} \\
 \qquad  =  ( I   - \kappa   e^{-a(t,s)}  )^{-1}   \partial_t  a(t,s)
\end{array}  \end{equation}
is valid under the commutation assumption between $\psi = U(t,s)$ and $\partial_t \psi = \partial_t U(t,s)$.
According to the spectral structure of $U(t,s)$, its logarithm ``$\log U(t,s)$" cannot necessarily be defined by the Riesz-Dunford integral.
However, the logarithm ``$\log e^{a(t,s)}$" is necessarily well defined  by the Riesz-Dunford integral, because $a(t,s)$ with a certain $\kappa$ is always bounded on $X$ regardless of the spectral structure of bounded operator $U(t,s)$.
That is, it is necessary to introduce a translation (i.e., a certain nonzero complex number $\kappa$) for defining logarithm functions of operator.
Here it is not necessary to calculate $\log  (U(t,s))$ at the intermediate stage, and only the most left hand side and the most right hand side of Eq.~\eqref{intermed} make sense.

In terms of applying nonlinear transforms such as the Miura transform and the Cole-Hopf transform, it is necessary to identify the functions with the operators, which is mathematically equivalent to identify elements in $X$ with elements in $B(X)$.
In other words, it is also equivalent to regard a set of evolution operators as a set of infinitesimal generators.
%%%
The operator representation of the infinitesimal generator $A_2(t)$ of second order evolution equations has been obtained in \cite{20iwata-1}.
Under the commutation assumption between  $\psi$ and $\partial_t \psi$ and that between $\partial_t \psi$ and $\partial_t^2 \psi$, the representation of infinitesimal generator $A_2(t)$ in Eq.~(\ref{cm2nd}) is formally obtained by
\begin{equation}  \label{cm3rd}  \begin{array}{ll} 
A_2(t)   = \psi^{-1} \partial_t^2 \psi
 =  \left[ \psi^{-1} \partial_t \psi \right]  ~  \left[ (\partial_t \psi) ^{-1}  \partial_t^2 \psi \right] 
 = \left[ \partial_t \log \psi \right] ~ \left[ \partial_t \log (\partial_t \psi) \right]    \vspace{1.5mm} \\
\qquad  =   \left[ \partial_t \log U(t,s) \right]  ~ \left[ \partial_t \log (\partial_t U(t,s)) \right]  \vspace{1.5mm} \\
\qquad  =   ( I   - \kappa   e^{-a(t,s)}  )^{-1}   \partial_t  a_1(t,s)   ~   ( I   - \kappa   e^{{u a}(s,t)}  )^{-1}   \partial_t  a_2 (t,s)  
\end{array} \end{equation}
in $X$, where alternative infinitesimal generators $a_1 (t,s)$ and $a_2 (t,s)$, which are defined by 
\[ e^{a_1 (t,s)} = U(t,s) + \kappa I ,  
\quad
e^{a_2 (t,s)} = \partial_t U(t,s) + \kappa I,    \] 
and therefore by
\[ a_1 (t,s) =  {\rm Log} (  U(t,s) + \kappa I ),
\quad
a_2 (t,s) =  {\rm Log} ( \partial_t U(t,s) + \kappa I ),   \] 
generate  $e^{ a_1 (t,s)}$ and $e^{ a_2 (t,s)}$, respectively.
For $\psi(t) = U(t,s) u_s \in C^0([0,T];X)$, let generally unbounded operator $\partial_t U(t,s)$ be further assumed to be continuous with respect to $t$; for the definition of logarithm of unbounded evolution operators by means of the doubly-implemented resolvent approximation, see \cite{20iwata-3}. 
Note that the continuous and unbounded setting for $\partial_t U(t,s)$ is reasonable with respect to $C^0$-semigroup theory, since both $\psi (t) = U(t,s) u_s$ and $\partial_t U(t,s) u_s$ are the two main components of solution orbit defined in the infinite-dimensional dynamical systems.
Similar to Eq.~\eqref{intermed}, it is necessary to introduce a translation to define $\log U(t,s)$.
Consequently the most right hand side of Eq.~\eqref{cm3rd} is a mathematically valid representation for $A_2(t)$, where it is not necessary to calculate $\log U(t,s)$ and $\log \partial_t U(t,s)$ at the intermediate stage.
In this manner, the infinitesimal generators of second order evolution equations are factorized as the product of two logarithmic representations of operators: $\partial_t \log U(t,s)$ and $\partial_t \log \partial_t U(t,s)$ (for the details arising from the operator treatment, see \cite{20iwata-1}).
The order of $\partial_t \log U(t,s)$ and $\partial_t \log \partial_t U(t,s)$ can be changed independent of the boundedness/unboundedness of operator, as it is confirmed by the commutation assumption
\[
\psi^{-1} \partial_t^2 \psi = ( \partial_t^2 \psi) \psi^{-1}    = ( \partial_t^2 \psi) (\partial_t \psi)^{-1}  (\partial_t \psi) \psi^{-1} = [ (\partial_t \psi)^{-1}   \partial_t^2 \psi ] [ \psi^{-1} \partial_t \psi ]. \] 
This provides the operator representation of the combined Miura transform.
In unbounded operator situations, the domain space of infinitesimal generators should be carefully discussed. 
Indeed, the domain space of $A_2(t)$, which must be a dense subspace of $X$, is expected to satisfy
\begin{equation} \begin{array}{ll}  \label{cm1} 
D(A_2(t)) = \left\{  u \in X; ~ \{  \partial_t {\hat a}(t,s)  u \} \subset D( \partial_t a(t,s) )  \right\}
\subset X,   \vspace{1.5mm} \\
D( \partial_t a(t,s) ) =  \left\{  u \in X; ~ \{ \partial_t a(t,s) u \} \subset  X  \right\}
\subset X
\end{array} \end{equation}
or
\begin{equation}  \begin{array}{ll}  \label{cm2} 
D(A_2(t)) = \left\{  u \in X; ~ \{  \partial_t a (t,s)  u \} \subset D( \partial_t {\hat a}(t,s) )  \right\}
\subset X,   \vspace{1.5mm} \\
D( \partial_t {\hat a}(t,s) ) =  \left\{  u \in X; ~ \{ \partial_t {\hat a}(t,s) u \} \subset  X  \right\}
\subset X
\end{array} \end{equation}
depending on the order of product, where note that both $a(t,s)$ and $\partial_t a(t,s)$ depend on $t,s \in [0,T]$.

Consequently, for solutions $\psi(t) = U(t,s) u_s$ satisfying $ \partial_t \psi = A_1(t) \psi $, 
the second order evolution equation
\begin{equation} \label{cm2nd}  \begin{array}{ll}
 \partial_t^2 \psi = A_2(t) \psi 
\end{array} \end{equation}
is also satisfied by setting the infinitesimal generator $A_2(t)$ as defined by the combined Miura transform \eqref{basic}.
It means that $A_2(t)$ is automatically determined by a given operator $A_1(t)$. 
In the following, beginning with the second order formalism (i.e., the combined Miura transform),  the relation is generalized as the recurrence formula for defining the higher order operator $A_k$ ($k \ge 3$).

\section{Main result}

\subsection{Recurrence formula generalizing the combined Miura transform} 
For exactly the same $\psi(t) = U(t,s)$ satisfying $ \partial_t \psi = A_1(t) \psi$, a generally-unbounded operator $\partial_t^k A$ be continuous with respect to $t$ ($k \ge 1$: integer).
Let the infinitesimal generator of $k$-th order evolution equations be defined by
\[
A_k =   \psi^{-1}  \partial_t^{k}  \psi
\]
under the commutation assumption.
Among limited numbers of preceding works, a lecture note summarizing the theory of higher order abstract equations, see \cite{98xiao}.
For defining the higher-order infinitesimal generator in an abstract manner, a recurrence formula is introduced.
In this section, the infinitesimal generators $A_k(t)$ are assumed to be general ones holding the time dependence.

\begin{theorem} \label{thm01}
For $t \in [0,T]$, let generally-unbounded closed operators $A_1(t), A_2(t), \cdots A_n(t)$ be continuous with respect to $t$ defined in a Banach space $X$. 
Here $A_1(t)$ is further assumed to be an infinitesimal generator of the first-order evolution equation
\begin{equation} \label{cp1} \begin{array}{ll}
   \partial_t \psi(t) = A_{1}(t) \psi(t), 
\end{array} \end{equation}
in $X$, where $\psi(t)$ satisfying the initial condition $\psi(0) = \psi_0 \in X$ is the solution of the Cauchy problem of \eqref{cp1}.
The commutation between $\psi$ and $\partial_t \psi$, and that between $\psi$ and $\partial_t^n \psi$ are assumed to be valid ($n \ge 2$).
Then the infinitesimal generator of the $n$-th order evolution equation
\begin{equation} \label{cp2} \begin{array}{ll}
   \partial_t^n \psi(t) = A_{n} (t) \psi(t)
\end{array} \end{equation}
is given by the recurrence formula
\begin{equation} \label{recc} \begin{array}{ll}
A_{n}(t) = (\partial_t + A_{1}(t) ) A_{n-1}(t)  \vspace{1.5mm} \\
\end{array} \end{equation}
being valid for $n \ge 2$, where $\psi(t)$ is common to Eqs.~\eqref{cp1} and \eqref{cp2}.
\end{theorem}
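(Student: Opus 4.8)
The plan is to argue by induction on $n$, taking the combined Miura transform as the base case $n=2$: there the recurrence reads $A_2=(\partial_t+A_1)A_1=\partial_t A_1+A_1^2$, which is exactly the Riccati form \eqref{basic} with $v=\psi^{-1}\partial_t\psi=A_1$, so that \eqref{cm2nd} is recovered. Throughout I would work from the operator identities $\partial_t^k\psi=A_k\psi$ obtained by rewriting the definition $A_k=\psi^{-1}\partial_t^k\psi$; the commutation between $\psi$ and $\partial_t^k\psi$ in the hypothesis is precisely what makes $A_k$ commute with $\psi$ and what guarantees $\psi^{-1}\partial_t^k\psi=(\partial_t^k\psi)\psi^{-1}$, so that the left and right actions of $A_k$ on $\psi$ coincide.

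For the inductive step I would start from the order-$(n-1)$ master equation $\partial_t^{n-1}\psi=A_{n-1}\psi$, valid by the induction hypothesis, and differentiate it once in $t$. The Leibniz rule for the operator product gives
\[
\partial_t^n\psi=(\partial_t A_{n-1})\psi+A_{n-1}\,\partial_t\psi .
\]
Substituting the first order equation $\partial_t\psi=A_1\psi$ into the last term, and equating the left-hand side with the order-$n$ master equation $\partial_t^n\psi=A_n\psi$, yields
\[
A_n\psi=(\partial_t A_{n-1})\psi+A_{n-1}A_1\psi .
\]
Right-multiplication by $\psi^{-1}$ (legitimate since $\psi=U(t,s)$ is invertible in the setting where $A_k=\psi^{-1}\partial_t^k\psi$ is defined) removes the common factor $\psi$ and leaves $A_n=\partial_t A_{n-1}+A_{n-1}A_1$.

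The remaining point, and the one I expect to be the main obstacle, is to convert the order $A_{n-1}A_1$ produced by the differentiation into the order $A_1A_{n-1}$ appearing in \eqref{recc}. This reordering is exactly the content of the commutation assumption: writing $A_1=(\partial_t\psi)\psi^{-1}$ and $A_{n-1}=\psi^{-1}\partial_t^{n-1}\psi$ and pushing the factors of $\psi^{-1}$ through, one checks that $A_1A_{n-1}=A_{n-1}A_1$ is equivalent to the commutation of $\partial_t\psi$ and $\partial_t^{n-1}\psi$, which the standing hypothesis supplies (and which is automatic, for instance, in the autonomous case $\psi=\exp((t-s)A_1)$, where every $\partial_t^k\psi$ is a polynomial in $A_1$ times $\psi$). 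With this identification one obtains $A_n=\partial_t A_{n-1}+A_1A_{n-1}=(\partial_t+A_1)A_{n-1}$, completing the step.

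Finally I would address the unbounded-operator bookkeeping, which is where the genuine care is required rather than in the algebra. The differentiation of $A_{n-1}$ and the manipulation $\partial_t\psi^{-1}=-\psi^{-1}(\partial_t\psi)\psi^{-1}$ must be justified on a suitable dense subspace, and the composed generator $A_n$ must be shown to be densely defined; I would track this exactly as in the second order case, using the nested domain conditions \eqref{cm1} and \eqref{cm2} together with the $t$-continuity of each $\partial_t^k\psi$ assumed in the statement. The logarithmic representation \eqref{altrep} supplies a bounded realization of each factor $\partial_t\log\partial_t^{k}\psi$, so that the recurrence can be read as a composition of well-defined operators at every stage.
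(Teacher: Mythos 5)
Your proposal is correct and follows the same overall strategy as the paper: induction on $n$, with the combined Miura transform as the base case and a Leibniz-rule differentiation as the inductive step. The one genuine difference is the direction of the inductive step. The paper \emph{defines} $A_n=\partial_t A_{n-1}+A_1A_{n-1}$, substitutes $A_{n-1}=\psi^{-1}\partial_t^{n-1}\psi$ and $A_1=\psi^{-1}\partial_t\psi$, and checks that the expression collapses to $\psi^{-1}\partial_t^{n}\psi$; with this ordering the term $\partial_t A_{n-1}=-\psi^{-1}(\partial_t\psi)\psi^{-1}(\partial_t^{n-1}\psi)+\psi^{-1}\partial_t^{n}\psi=-A_1A_{n-1}+\psi^{-1}\partial_t^{n}\psi$ cancels the added $A_1A_{n-1}$ essentially identically, so only the commutation of $\psi$ with $\partial_t\psi$ is really exercised. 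You instead differentiate $\partial_t^{n-1}\psi=A_{n-1}\psi$ and \emph{derive} $A_n=\partial_t A_{n-1}+A_{n-1}A_1$, which forces you to commute $A_{n-1}$ past $A_1$ to reach the stated form $(\partial_t+A_1)A_{n-1}$; as you correctly note, this needs the commutation of $\partial_t\psi$ with $\partial_t^{n-1}\psi$, which is not literally among the hypotheses of the theorem (only $\psi$ with $\partial_t\psi$ and $\psi$ with $\partial_t^{n}\psi$ are assumed), though it is consistent with the paper's blanket commutation conventions and automatic in the autonomous case. Your version buys a cleaner logical direction (the recurrence is deduced rather than verified) at the price of that one extra commutation; your closing remarks on domains and the logarithmic realization go beyond what the paper's proof actually addresses.
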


\begin{proof}
The statement is proved by the mathematical induction.
Let $\psi$ satisfy $\partial_t \psi(t) = A_{1}(t) \psi(t) $.
In case of $n=2$, let $A_2$ satisfy $\partial_t \psi(t) = A_{2}(t) \psi(t)$.
The infinitesimal generator $A_2(t)$ is formally represented by
\[
A_{2} (t) = \partial_t A_{1}(t) + A_{1}^2(t),
\]
as it is readily understood by the combined Miura transform (see also \eqref{basic}).
Let the relation for $n=k-1$: 
\[
A_{k-1}(t) = \psi(t)^{-1} \partial_{t}^{k-1} \psi(t)
\]
be satisfied.
By substituting $A_{k-1} (t)= \psi^{-1} \partial_{t}^{k-1} \psi$ and $ A_{1}(t) = \psi^{-1} \partial_{t} \psi$ into
\[
A_k(t) = \partial_t A_{k-1}(t) + A_1(t) A_{k-1}(t),
\]
it results in
\[ \begin{array}{ll}
A_k(t) = \partial_t (\psi^{-1} \partial_{t}^{k-1} \psi) + (\psi^{-1} \partial_{t} \psi) ( \psi^{-1} \partial_{t}^{k-1} \psi)  \vspace{1.5mm} \\
\quad =-  \psi^{-2} (\partial_t \psi) (\partial_{t}^{k-1} \psi)  + \psi^{-1}  \partial_{t}^{k} \psi
+ (\psi^{-1} \partial_{t} \psi) ( \psi^{-1} \partial_{t}^{k-1} \psi) \vspace{1.5mm} \\
\quad =  \psi^{-1}  \partial_{t}^{k} \psi
\end{array}  \]
under the commutation assumption.
Consequently,
\[   \partial_{t}^{k} \psi(t) = A_k(t) \psi(t) \]
is obtained.
\end{proof}

Using Eq.~(\ref{recc}), the operator version of Riccati's differential equation is obtained if $n=2$ is applied.
The resulting equation
\[ \begin{array}{ll}
A_{2}(t) = \partial_t A_{1}(t) + [A_{1}(t)]^2  \vspace{1.5mm} \\
\end{array} \]
is the Riccati type differential equation to be valid in the sense of operator.
The domain space of $A_2(t)$ is determined by $A_1(t)$, and not necessarily equal to the domain space of $A_1(t)$.  
Consequently, the Riccati type nonlinear differential equation is generalized to the operator equation in finite/infinite-dimensional abstract Banach spaces. 
The obtained equation 
\[ A_{n}(t) = (\partial_t + A_{1}(t) ) A_{n-1}(t)  \]
itself is nonlinear if $n=2$, while linear if $n\ne 2$. 
Its dense domain, which is also recursively determined, is assumed to satisfy
\[  \begin{array}{ll} 
D(A_n(t)) = \left\{  u \in X; ~ \{  A_{n-1}  u \} \subset D( A_1(t) )  \right\}
\subset X,  
\end{array} \]
for $n \ge 2$, with
\[  \begin{array}{ll} 
D( A_1(t) ) =  \left\{  u \in X; ~ \{ A_1 u \} \subset  X  \right\} \subset X.
\end{array} \]
In the following examples, concrete higher order evolution equations are shown.
In each case, the recurrence formula plays a role of transform between the first order equation and $n$th order equation. 
\vspace{2.5mm} \\
%%%
{\bf Example 1. [2nd order evolution equation].} ~The second order evolution equations, which is satisfied by a solution $\psi$ of first order evolution equation $\partial_t \psi = \partial_x^k \psi$, is obtained.
The operator $A_1$ is given by $t$-independent operator $\partial_x^k$ ($k$ is a positive integer).
\[ \begin{array}{ll}
A_{2} = (\partial_t + \partial_x^k ) \partial_x^k  
= \partial_t  \partial_x^k + \partial_x^{2k},  \vspace{1.5mm} \\
\end{array} \]
and then, by applying Eq.~(\ref{recc}), the second order evolution equation
\begin{equation}  \label{evo02} \begin{array}{ll}
\partial_t^2 u = \partial_t  \partial_x^k u + \partial_x^{2k} u 
\end{array} \end{equation}
is obtained.
Indeed, let $u$ be a general or special solution of $\partial_t u = \partial_x^k u$, the validity of statement
\[ \begin{array}{ll}
\partial_t u = \partial_x^k u
\qquad \Rightarrow \qquad
\partial_t^2 u = \partial_t  \partial_x^k u + \partial_x^{2k} u 
\end{array} \]
is confirmed by substituting the operator equality $\partial_x^k  = u^{-1} \partial_t u$ and the associated equality
\[ \begin{array}{ll}
\partial_t  \partial_x^k  = \partial_t (u^{-1} \partial_t u)  = -(u^{-1} \partial_t u)^2 + (\partial_t^{2} u)  u^{-1},  \vspace{2.5mm} \\
 \partial_x^{2k}  = (u^{-1} \partial_t u)^2 \\
\end{array} \]
to the right hand side of Eq.~\eqref{evo02}, where the commutation between $u$, $\partial_t u$ and $\partial_t^2 u$ is assumed to be vald. 
Equation~\eqref{evo02} is a kind of wave equation in case of $k=1$.
Note that the obtained equation is a linear equation.
 \vspace{1.5mm} \\
%%%
{\bf Example 2. [3rd order evolution equation]} ~The third order evolution equations, which is satisfied by a solution $\psi$ of first order evolution equation $\partial_t \psi = \partial_x^k \psi$, is obtained in the same manner.
The operator $A_1$ is given by time-independent operator $\partial_x^k$  ($k$ is a positive integer).
\[ \begin{array}{ll}
A_{3} = (\partial_t + \partial_x^k ) ( \partial_t  \partial_x^{k} + \partial_x^{2k}) 
=  \partial_t  (\partial_t  \partial_x^k + \partial_x^{2k}) + 
\partial_x^k ( \partial_t  \partial_x^k + \partial_x^{2k}) ,  \vspace{1.5mm} \\
=   \partial_t^2  \partial_x^{k} + 2 \partial_t \partial_x^{2k}  
  + \partial_x^{3k} ,  \vspace{1.5mm} \\
\end{array} \]
and the third order evolution equation
\[ \begin{array}{ll}
\partial_t^3 u 
=  \partial_t^2  \partial_x^k u + 2 \partial_t \partial_x^{2k}  u  + \partial_x^{3k} u
\end{array} \]
is obtained.
The validity is confirmed by substituting the operator equality $\partial_x^k  = u^{-1} \partial_t u $.

\subsection{Logarithmic representation of $n$-th order infinitesimal generator}
Although the relation between $A_n$ and a given $A_1$ can be understood by Theorem 1, those representations and the resulting representations of evolution operator ($C_0$-semigroup) are not understood at this point.
In this section, utilizing the logarithmic representation of the infinitesimal generator, the representation of infinitesimal generator for the high order evolution equation is obtained.
Since the logarithmic representation has been known to be associated essentially with the first- and second-order evolution equations, the discussion in the present section clarifies a universal role of logarithm of operators, independent of the order of evolution equations.
 
\begin{theorem} \label{thm02}
For $t \in [0,T]$, let generally-unbounded closed operators $A_1(t), A_2(t), \cdots A_n(t)$ be continuous with respect to $t$ defined in a Banach space $X$. 
Here $A_1$ is further assumed to be an infinitesimal generator of the first-order evolution equation \eqref{cp1}.
For the $n$-th order evolution equations
\begin{equation}   \label{nthmastereq}
\partial_t^n u(t) = A_n(t) u(t)
\end{equation}
in $X$, the commutation between $\psi$, $\partial_t \psi$,  $\cdots \partial_t^n \psi$ are assumed to be valid.
Then $n$-th order operator $A_n(t)$ is represented by the product of logarithmic representations
\begin{equation}  \label{repu} \begin{array}{ll}
A_n (t) 
=   \Pi_{k=1}^n   \left[    (\kappa  {\mathcal U}_k(s,t) + I)
\partial_t  {\rm Log} (  {\mathcal U_k}(t,s) + \kappa I)   \right],
\end{array} \end{equation}
where $\kappa$ is a certain complex number, and  ${\mathcal U}_k(t,s)$ is the evolution operator of $ \partial_t^{k}  \psi = {\mathcal A}_k(t)   \partial_t^{k-1}  \psi $.
Note that the commutation between operators is assumed.
 In the operator situation, the commutation assumption is equivalent to assume a suitable domain space setting for each $ (\kappa  {\mathcal U}_k(s,t) + I) \partial_t  {\rm Log} (  {\mathcal U_k}(t,s) + \kappa I) $.
\end{theorem}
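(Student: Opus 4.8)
The plan is to reduce Theorem~\ref{thm02} to the first-order logarithmic representation \eqref{mastart} by exhibiting $A_n(t)$ as a telescoping product. Under the assumed commutation among $\psi, \partial_t \psi, \ldots, \partial_t^n \psi$, the quotient $\psi^{-1}\partial_t^n\psi$ factors as
\[
A_n(t) = \psi^{-1}\partial_t^n\psi = \prod_{k=1}^n \left[ (\partial_t^{k-1}\psi)^{-1}(\partial_t^k\psi) \right],
\]
since the intermediate factors cancel pairwise, exactly as in the $n=2$ identity \eqref{cm3rd}. First I would verify this telescoping carefully, inserting $(\partial_t^{k-1}\psi)(\partial_t^{k-1}\psi)^{-1} = I$ between successive derivatives and using commutation to move the inverses past the factors; this is the algebraic backbone of the argument and the step on which the whole representation rests.

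Next I would identify each factor with a genuine first-order logarithmic derivative. Setting $\phi_k := \partial_t^{k-1}\psi$, the $k$-th factor is $\mathcal{A}_k(t) = \phi_k^{-1}\partial_t\phi_k = \partial_t \log(\partial_t^{k-1}\psi)$, and $\phi_k$ solves the first-order equation $\partial_t\phi_k = \mathcal{A}_k(t)\phi_k$, that is $\partial_t^k\psi = \mathcal{A}_k(t)\partial_t^{k-1}\psi$. Thus $\phi_k(t) = \partial_t^{k-1}\psi$ is, in the operator version, precisely the evolution operator $\mathcal{U}_k(t,s)$ generated by $\mathcal{A}_k(t)$. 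Applying the first-order representation \eqref{mastart} to each $\mathcal{A}_k$ --- with $U(t,s)$ replaced by $\mathcal{U}_k(t,s)$ --- gives $\mathcal{A}_k(t) = (\kappa\mathcal{U}_k(s,t)+I)\partial_t\,{\rm Log}(\mathcal{U}_k(t,s)+\kappa I)$, and substituting these into the telescoping product yields \eqref{repu}. The consistency check at $k=1$, where $\mathcal{U}_1(t,s) = \psi = U(t,s)$ and $\mathcal{A}_1 = A_1$, recovers \eqref{mastart} itself.

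The hard part will be the operator-theoretic bookkeeping rather than the algebra. Each $\mathcal{U}_k(t,s)$ is generally unbounded, so I would need the translated logarithm ${\rm Log}(\mathcal{U}_k(t,s)+\kappa I)$ to be well defined through the Riesz--Dunford integral for a suitable $\kappa$, exactly as arranged in \eqref{intermed}--\eqref{cm3rd} and in \cite{20iwata-3}. Likewise the telescoping and the commutation-based rearrangement of the product remain formal until one fixes, at each stage, a dense domain on which the successive products act, in the spirit of the recursive domain prescription following \eqref{cm1}--\eqref{cm2}. I expect these domain and well-definedness issues, together with the requirement that $\partial_t^{k-1}\psi$ be continuous in $t$ so that $\mathcal{U}_k(t,s)$ genuinely generates a $C^0$-semigroup, to be the only real obstacle; once they are granted, the representation follows by pure telescoping plus the already-established first-order formula.
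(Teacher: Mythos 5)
Your proposal follows essentially the same two-step route as the paper's own proof: the telescoping factorization $\psi^{-1}\partial_t^n\psi = \prod_{k=1}^n\bigl[(\partial_t^{k-1}\psi)^{-1}\partial_t^k\psi\bigr]$ followed by applying the first-order logarithmic representation \eqref{mastart} to each factor via the auxiliary equation $\partial_t^k\psi = \mathcal{A}_k(t)\partial_t^{k-1}\psi$ with evolution operator $\mathcal{U}_k(t,s)$. Your closing remarks on domains, the translated logarithm, and the unboundedness of each factor match the caveats the paper itself records at the end of its proof, so no gap remains to flag.
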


\begin{proof}
According to Theorem~\ref{thm01} and therefore to the operator version of the combined Miura transform, the $n$-th order  infinitesimal generator is regarded as $A_{n}(t) = \psi^{-1} \partial_t^{n} \psi$ under the commutation assumption.
In the first step, the $n$-th order infinitesimal generator is factorized as
\begin{equation} \label{recc2} \begin{array}{ll}
 \psi^{-1}   \partial_t^{n} \psi  \vspace{1.5mm} \\
=  \psi^{-1}  ( \partial_t^{n-1} \psi )     (  \partial_t^{n-1} \psi )^{-1}   \partial_t^{n} \psi   \vspace{1.5mm} \\
=  \psi^{-1}  ( \partial_t \psi )   ( \partial_t \psi )^{-1}     \cdots  ( \partial_t^{n-2} \psi )   ( \partial_t^{n-2} \psi )^{-1}    ( \partial_t^{n-1} \psi )     (  \partial_t^{n-1} \psi )^{-1}   \partial_t^{n} \psi  \vspace{1.5mm} \\
= \left[ \psi^{-1}  ( \partial_t \psi ) \right]  \left[ ( \partial_t \psi )^{-1}   ( \partial_t^{2} \psi )  \right]   \cdots  \left[ ( \partial_t^{n-2} \psi )^{-1}    ( \partial_t^{n-1} \psi ) \right]    \left[    (  \partial_t^{n-1} \psi )^{-1}   \partial_t^{n} \psi \right]    
\end{array} \end{equation}
where the commutation assumption is necessary for obtaining each $ \psi^{-1}   \partial_t^{k} \psi$ with $1 \le k \le n$.
Under the commutation assumption, the representation
\begin{equation} \label{recx3} \begin{array}{ll}
 A_n(t) = \psi^{-1}   \partial_t^{n} \psi  
 = \Pi_{k=1}^n   \left[    (  \partial_t^{k-1} \psi )^{-1}   \partial_t^{k} \psi \right] .
\end{array} \end{equation}
is obtained.

In the second step, the logarithmic representation is applied to $ (  \partial_t^{k-1} \psi )^{-1}   \partial_t^{k} \psi$.
The logarithmic representation for the first order abstract equation $ \partial_t {\tilde \psi} =   {\mathcal A}_k (t) {\tilde \psi}  \Leftrightarrow  \partial_t^{k} \psi =   {\mathcal A}_k(t) \partial_t^{k-1} \psi $ with ${\tilde \psi} = \partial_t^{k-1} \psi$ is
\begin{eqnarray*}
   {\mathcal A}_k (t) 
&=&  (  \partial_t^{k-1} \psi )^{-1}   \partial_t^{k}  \psi     \vspace{1.5mm} \\
&=&  (  \partial_t^{k-1}  {\mathcal U}_k(t,s) )^{-1}   \partial_t^{k}   {\mathcal U}_k(t,s)    \vspace{1.5mm} \\
&=&   \left[ (\kappa  {\mathcal U}_k(s,t) + I)
\partial_t  {\rm Log} (  {\mathcal U_k}(t,s) + \kappa I) \right] ,
\end{eqnarray*} 
where $ {\mathcal U}_k(t,s)$ is the evolution operator generated by ${\mathcal A}_k(t)$, and $  \psi =  {\mathcal U}_k(t,s) $ is applied for obtaining the operator equality. 
Again, the commutation assumption is necessary to obtain the logarithmic representation.
Consequently, the higher order infinitesimal generator becomes
\begin{eqnarray*} 
   {\mathcal A}_n (t)   &=& \Pi_{k=1}^n   \left[    (  \partial_t^{k-1}   {\mathcal U}_k(t,s) )^{-1}   \partial_t^{k}   {\mathcal U}_k(t,s)  \right]  \vspace{1.5mm} \\
 &=& \Pi_{k=1}^n   \left[   (\kappa  {\mathcal U}_k(s,t) + I)
\partial_t  {\rm Log} (  {\mathcal U_k}(t,s) + \kappa I)   \right]. 
\end{eqnarray*}
In this formalism, several possible orderings arise from the commutation assumption.
Since each component $\left[   (\kappa  {\mathcal U}_k(s,t) + I) \partial_t  {\rm Log} (  {\mathcal U_k}(t,s) + \kappa I)   \right]$ possibly unbounded in $X$, the choice of the domain space should be carefully chosen as discussed around Eqs.~\eqref{cm1} and \eqref{cm2}.
\end{proof}

Although the commutation assumption in Theorems \ref{thm01} and \ref{thm02} is restrictive to the possible applications, all the $t$-independent infinitesimal generators satisfy this property.
That is, the present results are applicable to linear/nonlinear heat equations, linear/nonlinear wave equations, and linear/nonlinear Schr\"odinger equations.
Consequently, a new path is introduced to the  higher order evolution equation in which the concept of "higher order" is reduced to the concept of "operator product" in the theory of abstract evolution equations.
%%%
Using the alternative infinitesimal generator being defined by $e^{\alpha_k(t,s)} = {\mathcal U}_k(t,s) + \kappa I$,
${\mathcal U}_k(t,s)$ can be replaced with $e^{\alpha_k(t,s)}$, and the following corollary is valid.

\begin{corollary} \label{cor03}
For $t \in [0,T]$, let generally-unbounded closed operators $A_1(t), A_2(t), \cdots A_n(t)$ be continuous with respect to $t$ defined in a Banach space $X$. 
Here $A_1$ is further assumed to be an infinitesimal generator of the first-order evolution equation \eqref{cp1}.
For the $n$-th order evolution equations
\[
\partial_t^n u(t) = A_n(t) u(t)
\]
in $X$, the commutation between $\psi$, $\partial_t \psi$,  $\cdots \partial_t^n \psi$ are assumed to be valid.
Then the $n$-th order operator is represented by the product of logarithmic representations
\begin{equation}  \label{repalt} \begin{array}{ll}
A_n (t) 
=   \Pi_{k=1}^n   \left[     ( I - \kappa   e^{-\alpha_k(t,s)} )^{-1} 
\partial_t  \alpha_k(t,s)    \right] ,
\end{array} \end{equation}
where $\kappa$ is a certain complex number, and  ${\mathcal U}_k(t,s)$ is the evolution operator generated by $  \partial_t^{k}  \psi = {\mathcal A}_k \partial_t^{k-1} \psi $ and $\alpha_k(t,s)$ is an alternative infinitesimal generator to $ {\mathcal A}_k$ satisfying the relation $e^{\alpha_k(t,s)} = {\mathcal U}_k(t,s) + \kappa I$.
Note that the commutation between operators is assumed, so that the order of logarithmic representation can be changed by assuming a suitable domain space settings.
\end{corollary}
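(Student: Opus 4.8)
The plan is to obtain the statement as a direct consequence of Theorem~\ref{thm02} combined with the equivalence between the two logarithmic representations \eqref{mastart} and \eqref{altrep} already established for a single generator. The starting point is the product formula \eqref{repu},
\[
A_n(t) = \Pi_{k=1}^n \left[ (\kappa \, {\mathcal U}_k(s,t) + I)\, \partial_t \, {\rm Log}( {\mathcal U}_k(t,s) + \kappa I) \right],
\]
in which each bracketed factor is the original logarithmic representation of the first-order generator ${\mathcal A}_k(t)$ attached to $\partial_t^{k}\psi = {\mathcal A}_k(t)\,\partial_t^{k-1}\psi$. The task reduces to rewriting each factor in the alternative form of \eqref{repalt}, factor by factor.

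First I would insert the definition $e^{\alpha_k(t,s)} = {\mathcal U}_k(t,s) + \kappa I$, equivalently $\alpha_k(t,s) = {\rm Log}({\mathcal U}_k(t,s) + \kappa I)$, so that the derivative part matches immediately: $\partial_t \, {\rm Log}({\mathcal U}_k(t,s) + \kappa I) = \partial_t \alpha_k(t,s)$. It then remains to verify the operator identity $\kappa \, {\mathcal U}_k(s,t) + I = (I - \kappa e^{-\alpha_k(t,s)})^{-1}$. Using $e^{-\alpha_k(t,s)} = ({\mathcal U}_k(t,s) + \kappa I)^{-1}$ and simplifying,
\[
I - \kappa ({\mathcal U}_k(t,s) + \kappa I)^{-1} = {\mathcal U}_k(t,s)\,({\mathcal U}_k(t,s) + \kappa I)^{-1},
\]
whose inverse is $({\mathcal U}_k(t,s) + \kappa I)\,{\mathcal U}_k(t,s)^{-1} = I + \kappa\,{\mathcal U}_k(t,s)^{-1} = \kappa\,{\mathcal U}_k(s,t) + I$. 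This is exactly the computation already carried out when passing from \eqref{mastart} to \eqref{altrep}, now applied to each ${\mathcal U}_k$ separately. Substituting both identities back into \eqref{repu} produces \eqref{repalt}.

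The conceptual point I would emphasize, rather than the routine algebra, is that the alternative form requires only the semigroup structure of each ${\mathcal U}_k(t,s)$: the quantity $e^{-\alpha_k(t,s)} = ({\mathcal U}_k(t,s) + \kappa I)^{-1}$ is always defined by a convergent power series for a suitable $\kappa$, whereas the original factor $\kappa\,{\mathcal U}_k(s,t) + I$ still carries the temporary group assumption that ${\mathcal U}_k(s,t) = {\mathcal U}_k(t,s)^{-1}$ exists. Thus the passage to \eqref{repalt} is not merely cosmetic but removes the invertibility requirement, precisely as noted after \eqref{altrep}.

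The main obstacle I expect lies not in the algebraic identity but in the bookkeeping of domains for the unbounded factors: each bracketed term in the product may be unbounded, so the identity and the whole product must be read on the recursively nested domains introduced around \eqref{cm1} and \eqref{cm2}. Under the commutation assumption inherited from Theorem~\ref{thm02}, the ordering of the factors is interchangeable provided the domain space is chosen compatibly for each $(I - \kappa e^{-\alpha_k(t,s)})^{-1}\,\partial_t \alpha_k(t,s)$; I would state this explicitly to close the argument, as the corollary itself asserts.
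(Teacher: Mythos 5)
Your proposal is correct and follows essentially the same route as the paper: the paper's own proof is the one-line observation that the statement follows from applying $e^{\alpha_k(t,s)} = {\mathcal U}_k(t,s) + \kappa I$ to the representation of Theorem~\ref{thm02}, and your factor-by-factor verification that $\kappa\,{\mathcal U}_k(s,t) + I = (I - \kappa e^{-\alpha_k(t,s)})^{-1}$ is exactly the computation the paper carries out once when passing from \eqref{mastart} to \eqref{altrep}. Your added remarks on removing the invertibility assumption and on domain bookkeeping merely make explicit what the paper leaves implicit.
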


\begin{proof}
The statement follows from applying
\[ e^{\alpha_k(t,s)} = {\mathcal U}_k(t,s) + \kappa I \]
to the representation shown in Theorem \ref{thm02}.
\end{proof}

Equation~\eqref{repalt} is actually a generalization of Eq.~\eqref{repu} as discussed around Eq.~\eqref{altrep}.

\subsection{$n$-th order generalization of Hille-Yosida type exponential function of operator} 
Let us take $t$-independent $n$-th order infinitesimal generator $A_n$.
The operators $A_k$ with $k=1,2, \cdots n$ are assumed to be the infinitesimal generator of $k$-th order evolution equations.
The specific equation for Eq.~\eqref{nthmastereq} is written as
\[
\omega^n - A_n = 0    
\]
by substituting a formal solution $e^{t \omega}$.
If the fractional power $A_n^{1/n}  $ of operator exists (for the definition of fractional powers of operators, see \cite{01carracedo}), 
$A_n^{1/n} $ is a root of the specific equation.
Furthermore, $A_n^{1/n} $ is assumed to be an infinitesimal generator of first order evolution equation $\partial_t u(t) = A_n^{1/n} u(t)$.
In this case, the specific equation is also written by
\[
\omega^n - A_n = 0  
\quad  \Leftrightarrow \quad
\left( \omega/A_n^{1/n} \right)^n -  I = 0.
\]
Note that the latter equation, which is called the cyclotomic equation in algebra \cite{1886scott,1836wantzel}, is known to hold the algebraic representation.
Consequently, based on the discussion made in the Appendix I, the integral representation of evolution operator is valid and one specific solution (more precisely, one of the fundamental solutions) is represented by
\begin{equation} \label{hy-gen}
{\mathcal U}_n(t,s) = \exp \left(t A_n^{1/n} \right)
= \exp \left( t ~  \left\{ \Pi_{k=1}^n   \left[     ( I - \kappa   e^{-a_k(t,s)} )^{-1} 
\partial_t  a_k(t,s)    \right]  \right\}^{1/n}  \right), 
\end{equation}
where $\kappa$ is a certain complex number, and  ${\mathcal U}_k(t,s)$ is the evolution operator generated by $  \partial_t^{k}  \psi = {\mathcal A}_k \partial_t^{k-1} \psi $ and $\alpha_k(t,s)$ is an alternative infinitesimal generator to $ {\mathcal A}_k$ satisfying the relation $e^{\alpha_k(t,s)} = {\mathcal U}_k(t,s) + \kappa I$.
The $n$-th order logarithmic representation is actually a generalization of the Hille-Yosida type generation theorem ($n=1$).
The representation shown in the most right hand side of \eqref{hy-gen} is always valid for certain $\kappa$, even if the fractional power $A_n^{1/n}  $ of operator is not well defined.
Here is an advantage of using alternative infinitesimal generators and the resulting logarithmic representation of unbounded operators.

\section{summary}
The recurrence formula for some higher order evolution equations is presented.
It connects the first order evolution equation with the higher order evolution equations.
By means of the logarithmic representation of the operators, the rigorous representation for the infinitesimal generators of evolution equations are obtained.
That is, 
\begin{itemize}
\item introduction of recurrence formula for obtaining a class of higher order equations: e.g.,  
\[  \begin{array}{ll}
\partial_t^2 u = \partial_t  \partial_x u + \partial_x^{2} u 
\end{array} \]
associated with $\partial_t u = \partial_x u$ (Eq.~\eqref{evo02} with $k=1$), where the introduced transform is represented by the recurrence formula generalizing the Miura transform (Theorem 1); \vspace{1.5mm}
\item higher order generalization of logarithmic representation of operators in which the concept of ``order of differential operator with respect to $t$" is reduced to the concept of ``multiplicity of operator product of infinitesimal generators". (Theorem 2);   \vspace{1.5mm}
\item generalization of Hille-Yosida type exponential function of operator (Eq.~\eqref{hy-gen})
\end{itemize}
have been done in this paper.
The present discussion shows another aspect of the Miura transform, which originally transform the solution of ``first-order" KdV equations to the solution of ``first-order" modified KdV equations.

\bibliographystyle{amsplain}

%    Insert the bibliography data here.

\newpage
\section*{Appendix I: Integral representation of evolution operator}
Here a short notice is made for the integral representation of $U(t,s)$, where the existence of evolution operator $U(t,s)$ is assumed in the present paper.
Generation of evolution operator or $C_0$-semigroup is definitely understood by the Hille-Yosida theorem in which the evolution operator is represented by the exponential function of operators.
Let $t,s$ be included in a certain interval $[0,T]$.
In particular, it is readily seen by $t$-independent case when $A_1(t) \equiv A_1$.
\[
\lambda I  - \int_s^t A_1(\tau) d \tau
= \lambda I - (t-s) A_1 
= (t-s) (\lambda (t-s)^{-1} I  - A_1),
\]
so that the estimate of resolvent operator
\[ \begin{array}{ll}
\left\| \left( \lambda I - \int_s^t A_1(\tau) d \tau  \right)^{-1}  \right\|
= (t-s)^{-1}   \left\|  ( \lambda(t-s)^{-1} I - A_1 )^{-1} \right\|     \vspace{1.5mm} \\
\le  \displaystyle {  (t-s)^{-1}   \frac{C}{{\rm Re} \lambda (t-s)^{-1}}  }
=\displaystyle { \frac{C}{{\rm Re} \lambda} }
\end{array} \]	
admits us to define and represent the operator $U(t,s)$ by
\[
\exp \left( \int_s^t  A_1(\tau) d\tau  \right)   = \exp \left( (t-s) A_1 \right) .
\]
On the other hand, it is generally difficult to find conditions to define $\exp ( \int_s^t  A_1(\tau) d\tau  )  $ if $A_1(t)$ is $t$-dependent. 
The difficulty of exponential representation depends on whether the commutation of $U(t,s)$ and $A_1(t)$ is assumed or not.
The details are discussed in the theory of abstract evolution equations of hyperbolic type \cite{70kato,73kato}.

\section*{Appendix II: Logarithmic representation}
The validity of logarithmic representation 
\[ {\rm Log}(U(t,s)+\kappa I)=\frac1{2\pi i}\int_C{\rm Log}\,\zeta\cdot(\zeta-(U(t,s)+\kappa I))^{-1}d\zeta, \]
is confirmed by beginning with the resolvent equality
\[ \begin{array}{ll}
 (\zeta-(U(t+\Delta t,s)+\kappa I))^{-1}-(\zeta-(U(t,s)+\kappa I))^{-1} \vspace{1.5mm} \\
 =(\zeta-(U(t+\Delta t,s)+\kappa I))^{-1}\{(\zeta-(U(t,s)+\kappa I))-(\zeta-(U(t+\Delta t,s)+\kappa I))\} \vspace{1.5mm} \\
 \times(\zeta-(U(t,s)+\kappa I))^{-1}  \vspace{1.5mm} \\
 =(\zeta-(U(t+\Delta t,s)+\kappa I))^{-1}\{U(t+\Delta t,s)-U(t,s)\}(\zeta-(U(t,s)+\kappa I))^{-1},
\end{array} \]	
and therefore
\[ \begin{array}{ll}
 \frac{1}{\Delta t}\left\{(\zeta-(U(t+\Delta t,s)+\kappa I))^{-1}-(\zeta-(U(t,s)+\kappa I))^{-1}\right\}  \vspace{1.5mm} \\
 =(\zeta-(U(t+\Delta t,s)+\kappa I))^{-1}\frac{U(t+\Delta t,s)-U(t,s)}{\Delta t}(\zeta-(U(t,s)+\kappa I))^{-1}.
\end{array} \]	
It follows that
\[ \begin{array}{ll}
 \frac{1}{\Delta t}\left\{{\rm Log}(U(t+\Delta t,s)+\kappa I)-{\rm Log}(U(t,s)+\kappa I)\right\}  \vspace{1.5mm} \\
 =\frac{1}{\Delta t}\left\{\frac1{2\pi i}\int_C{\rm Log}\,\zeta\cdot(\zeta-(U(t+\Delta t,s)+\kappa I))^{-1}d\zeta-\frac1{2\pi i}\int_C{\rm Log}\,\zeta\cdot(\zeta-(U(t,s)+\kappa I))^{-1}d\zeta\right\}  \vspace{1.5mm} \\
 =\frac{1}{\Delta t}\frac1{2\pi i}\int_C{\rm Log}\,\zeta\big\{(\zeta-(U(t+\Delta t,s)+\kappa I))^{-1}-(\zeta-(U(t,s)+\kappa I))^{-1}\big\}d\zeta   \vspace{1.5mm} \\
 =\frac1{2\pi i}\int_C{\rm Log}\,\zeta\cdot(\zeta-(U(t+\Delta t,s)+\kappa I))^{-1}\frac{U(t+\Delta t,s)-U(t,s)}{\Delta t}(\zeta-(U(t,s)+\kappa I))^{-1}d\zeta
\end{array} \]
If each element in $\{A(t)\}$ commutes with each other,
\[ \begin{array}{ll}
 \frac{1}{\Delta t}\left\{{\rm Log}(U(t+\Delta t,s)+\kappa I)-{\rm Log}(U(t,s)+\kappa I)\right\}  \vspace{1.5mm} \\
 =\frac1{2\pi i}\int_C{\rm Log}\,\zeta\cdot(\zeta-(U(t+\Delta t,s)+\kappa I))^{-1}(\zeta-(U(t,s)+\kappa I))^{-1}d\zeta\frac{U(t+\Delta t,s)-U(t,s)}{\Delta t}.
\end{array} \]
Consequently, by taking a limit $\Delta t\rightarrow0$ for $x\in D(A(s))$, 
\[ \begin{array}{ll}
 \partial_t{\rm Log}(U(t,s)+\kappa I)x=\frac1{2\pi i}\int_C{\rm Log}\,\zeta\cdot(\zeta-(U(t,s)+\kappa I))^{-2}d\zeta\cdot A(t)U(t,s)x
\end{array} \]
is obtained.
Here it is readily seen that the operator $\partial_t{\rm Log}(U(t,s)+\kappa I)$ is not generally bounded on $X$.
Integrating by parts leads to
\[ \begin{array}{ll}
 \frac1{2\pi i}\int_C{\rm Log}\,\zeta\cdot(\zeta-(U(t,s)+\kappa I))^{-2}d\zeta=-\frac1{2\pi i}\int_C{\rm Log}\,\zeta\cdot\frac{\partial}{\partial\zeta}(\zeta-(U(t,s)+\kappa I))^{-1}d\zeta \vspace{1.5mm} \\
 =\frac1{2\pi i}\int_C\zeta^{-1}\cdot(\zeta-(U(t,s)+\kappa I))^{-1}d\zeta=(U(t,s)+\kappa I)^{-1},
\end{array} \]
and therefore
\[ \partial_t{\rm Log}(U(t,s)+\kappa I)x=(U(t,s)+\kappa I)^{-1}A(t)U(t,s)x. \]
By taking $U(t,s)x=y$, which is equivalent to $x=U(t,s)^{-1}y=U(s,t)y$, the equality becomes
\[ \begin{array}{ll} 
\partial_t{\rm Log}(U(t,s)+\kappa I)U(s,t)y=(U(t,s)+\kappa I)^{-1}A(t)y   \vspace{1.5mm} \\
\Leftrightarrow A(t)y=(U(t,s)+\kappa I)\partial_t{\rm Log}(U(t,s)+\kappa I)U(s,t)y.
\end{array} \]
The commutation assumption complete the derivation.
\[ \begin{array}{ll} 
 A(t)y=U(s,t)(U(t,s)+\kappa I)\partial_t{\rm Log}(U(t,s)+\kappa I)y
=(I+\kappa U(s,t))\partial_t{\rm Log}(U(t,s)+\kappa I).
\end{array} \]

\end{document}